\theoremstyle{definition}
\newtheorem{thm}{Theorem}
\newtheorem{theorem}[thm]{Theorem}
\newtheorem{corollary}[thm]{Corollary}
\newtheorem{lemma}[thm]{Lemma}
\newtheorem{prop}[thm]{Proposition}
\newtheorem{defin}[thm]{Definition}
\newtheorem{remark}[thm]{Remark}
\newtheorem*{exm*}{Example}
\numberwithin{equation}{section}
\numberwithin{thm}{section}
\newcommand{\ft}{{\mathfrak{t}}}
\newcommand{\fb}{{\mathfrak{b}}}
\newcommand{\fh}{{\mathfrak{h}}}
\newcommand{\fg}{{\mathfrak{g}}}
\newcommand{\g}{\mathfrak{t}}
\newcommand{\gd}{\mathfrak{t}^{*}}
\renewcommand{\S}{{\mathbb{S}}}
\newcommand{\R}{\mathbb{R}}
\newcommand{\Z}{{\mathbb{Z}}}
\newcommand{\C}{{\mathbb{C}}}
\newcommand{\cM}{{\mathcal{M}}}
\newcommand{\G}{{\Gamma}}
\DeclareMathOperator{\maps}{Maps}
\DeclareMathOperator{\aut}{Aut}
\DeclareMathOperator{\hol}{Hol}
\title[$K$-theory of GKM bundles]{Equivariant $K$-theory of GKM bundles}
\author[V. Guillemin]{Victor Guillemin}
\address{Department of Mathematics, MIT, Cambridge, MA 02139}
\email{vwg@math.mit.edu}
\author[S. Sabatini]{Silvia Sabatini}
\address{Department of Mathematics, EPFL, Lausanne, Switzerland}
\email{silvia.sabatini@epfl.ch}
\author[C. Zara]{Catalin Zara}
\address{Department of Mathematics, University of Massachusetts
Boston, MA 02125}
\email{catalin.zara@umb.edu}
\date{January 31st, 2012}
\begin{document}

\begin{abstract}
Given a fiber bundle of GKM spaces, $\pi\colon M\to B$,
we analyze the structure of the equivariant $K$-ring of $M$ as a module
over the equivariant $K$-ring of $B$ by translating the fiber bundle, $\pi$,
into a fiber bundle of GKM graphs and constructing, by combinatorial techniques, a basis of
this module consisting of $K$-classes which are invariant under the natural holonomy action on the $K$-ring
of $M$ of the fundamental group of the GKM graph of $B$.
We also discuss the implications of this result for fiber bundles $\pi\colon M\to B$
where $M$ and $B$ are generalized partial flag varieties and show how our GKM description of
the equivariant $K$-ring of a homogeneous GKM space is related to the Kostant-Kumar description of this
ring.
\\$\;$\\
\emph{MSC:} Primary 55R91; Secondary 19L47; 05C90
\\$\;$\\
\emph{Keywords}: Equivariant $K$-theory; Equivariant fiber bundles; GKM manifolds; Flag manifolds
 
\end{abstract}

\maketitle

\tableofcontents
\section{Introduction}
Let $T$ be an $n$-torus and $M$ a compact $T$ manifold.
The action of $T$ on $M$ is said to be GKM if $M^T$ is finite and if,
in addition, for every codimension one subtorus, $T'$, of $T$ the connected
components of $M^{T'}$ are of dimension at most $2$.
An implication of this assumption is that these fixed point components are either isolated
points or diffeomorphic copies of $S^2$ with its standard $S^1$ action,
and a convenient way of encoding this fixed point data is by means of the
\emph{GKM graph}, $\Gamma$, of $M$.
By definition, the $S^2$'s above are the \emph{edges} of this graph,
the points on the fixed point set, $M^T$, are the \emph{vertices} of this graph
and two vertices are joined by an edge if they are the fixed points for the
$S^1$ action on the $S^2$ representing that edge.
Moreover, to keep track of which $T'$'s correspond to which edges, one
defines a labeling function $\alpha$ on the set of oriented edges of $\Gamma$
with values in the weight lattice of $T$.
This function (the ``axial function" of $\Gamma$) assigns to each oriented edge the weight of the isotropy
action of $T$ on the tangent space to the north pole of the $S^2$ corresponding
to this edge. (The orientation of this $S^2$ enables one to distinguish the north pole
from the south pole.)

The concept of GKM space is due to Goresky-Kottwitz-MacPherson, who showed that the
equivariant cohomology ring, $H_T(M)$, can be computed from $(\Gamma,\alpha)$ (see \cite{GKM}).
Then Allen Knutson and Ioanid Rosu (see \cite{R}) proved the much harder result  
that this is also true for the
equivariant $K$-theory ring, $K_T(M)$. (We will give a graph theoretic description
of this ring in section \ref{K theory} below.)

Suppose now that $M$ and $B$ are GKM manifolds and $\pi\colon M\to B$ a $T$-equivariant
fiber bundle. Then the ring $H_T(M)$ becomes a module over the ring $H_T(B)$
and in \cite{GSZ1} we analyzed this module structure from a combinatorial perspective 
by showing that the fiber bundle $\pi$ of manifolds gives rise to a fiber bundle,
$\pi\colon \Gamma_M\to \Gamma_B$ of GKM graphs, and showing that the salient module structure
is encoded in this graph fiber bundle.
In this article we will prove analogous results in $K$-theory.
More explicitly in the first part of this article (from section \ref{K theory} to \ref{flag GKM}) we will review the basic facts
about GKM graphs, the notion of ``fiber bundle" for these graphs and the definition
of the $K$-theory ring of a graph-axial function pair, and we will also
discuss an important class of examples: the graphs associated with GKM spaces of the form $M=G/P$,
where $G$ is a complex reductive Lie group and $P$ a parabolic subgroup of $G$.
The main results of this paper
are discussed in section \ref{sec k-theory} and \ref{inv classes}.
In section \ref{sec k-theory} we prove that for a fiber bundle of GKM graphs
$(\Gamma,\alpha)\to(\Gamma_B,\alpha_B)$ a set of elements $c_1\ldots,c_k$
of $K_{\alpha}(\Gamma)$ is a free set of generators of $K_{\alpha}(\Gamma)$
as a module over $K_{\alpha_B}(\Gamma_B)$, providing their restrictions to
 $K_{\alpha_p}(\Gamma_p)$ are a basis for 
$K_{\alpha_p}(\Gamma_p)$, where $\Gamma_p$ is the graph theoretical fiber of $(\Gamma,\alpha)$
over a vertex $p$ of $\G_B$.
Then in section \ref{inv classes} we describe an important class of such generators.
One property of a GKM fiber bundle is a holonomy action of the fundamental group
of $\G_B$ on the fiber and we show how a collection of holonomy invariant generating classes
$c_1',\ldots,c_k'$ of $K_{\alpha_p}(\Gamma_p)$ extend canonically to a free set of 
generators $c_1,\ldots,c_k$ of $K_{\alpha}(\Gamma)$.
In section \ref{proj spaces} and \ref{An Cn} we describe how these
results apply to concrete examples: special cases of the $G/P$ examples mentioned above.
Finally in section \ref{more general} we relate our GKM description of the equivariant $K$-ring,
$K_T(G/P)$, to a concise and elegant alternative description of this ring by Bertram Kostant and Shrawan Kumar
in \cite{KK}, and analyze from their perspective the fiber bundle
$G/B\to G/P$.
\\$\;$

To conclude these introductory remarks we would like to thank Tudor Ratiu for his support
and encouragement and Tara Holm for helpful suggestions about the relations between GKM and 
Kostant-Kumar.
\section{$K$-theory of integral GKM graphs}\label{K theory}
Let $\Gamma=(V,E)$ be a $d$-valent graph, where $V$ is the set of vertices, and $E$
the set of oriented edges; for every edge $e\in E$ from $p$ to $q$, we denote by $\overline{e}$
the edge from $q$ to $p$. Let $i\colon E\to V$ (resp. $t\colon E \to V$) be the map which assigns
to each oriented edge $e$ its initial (resp. terminal) vertex (so $i(e)=t(\overline{e})$ and $t(e)=i(\overline{e})$);
for every $p\in V$ let $E_p$ be the set of edges whose initial vertex is $p$.

Let $T$ be an $n$-dimensional torus; we define a ``$T$-action on $\Gamma$"
by the following recipe.
\begin{defin}
Let $e=(p,q)$ be an oriented edge in $E$. Then a \textit{connection along $e$} is a bijection
 $\nabla_e\colon E_p\to E_q$ such that $\nabla_e(e)=\overline{e}$.
 A connection on the graph $\G$ is a family $\nabla=\{\nabla_e\}_{e\in E}$ satisfying
 $\nabla_{\overline{e}}=\nabla^{-1}_e$ for every $e\in \G$.
\end{defin}
Let  $\gd$ be the dual of the Lie algebra of $T$ and $\Z^*_T$ its weight lattice.
\begin{defin}\label{connection}
Let $\nabla$ be a connection on $\Gamma$. A \textit{$\nabla$-compatible integral axial function} on $\G$ is a map
$\alpha\colon E\to \Z^*_T$ satisfying the following conditions:
\begin{enumerate}
\item $\alpha(\overline{e})=-\alpha(e)$;
\item for every $p\in V$ the vectors $\{\alpha(e)\mid e\in E_p\}$ are pairwise linearly independent;
\item for every edge $e=(p,q)$ and every $e'\in E_p$ we have
$$
\alpha(\nabla_e(e'))-\alpha(e')=m(e,e')\alpha(e)\;,
$$
where $m(e,e')$ is an integer which depends on $e$ and $e'$.
\end{enumerate}
\end{defin}
An \textit{integral axial function} on $\G$ is a map $\alpha\colon E \to \Z^*_{T}$ which is $\nabla$-compatible,
for some connection $\nabla$ on $\G$.
\begin{defin}
An \textit{integral GKM graph} is a pair $(\Gamma,\alpha)$ consisting of a regular graph $\G$ and an integral axial
function $\alpha\colon E\to \Z_{T}^*$
\end{defin}
\begin{remark}
The graphs we described in the introduction are examples of such graphs.
In particular condition 2 in definition \ref{connection} is a consequence 
of the fact that, for every codimension one subgroup of $T$, its
fixed point components are of dimension at most two, and condition 3 a consequence
of the fact that this subgroup acts trivially on the tangent bundles of these
component.
\end{remark}
Observe that an integral GKM graph is a particular case of an abstract GKM graph, as defined in \cite{GSZ1};
here we require $\alpha$ to take values in $\Z_{T}^*$ rather than in $\gd$, 
and in definition \ref{connection} (3)
we require $\alpha(\nabla_e(e'))-\alpha(e')$ to be an integer multiple of $\alpha(e)$, for
every $e=(p,q)\in E$ and $e'\in E_p$. 
The necessity of these integrality properties will be clear from the definition of \textit{$T$-action on} $\G$.
Let $R(T)$ be the representation ring of $T$; notice that $R(T)$ can be identified with the
character ring of $T$, i.e. with the ring of finite sums 
\begin{equation}\label{rep ring}
\sum_k m_k e^{2\pi\sqrt{-1}\alpha_k}\;,
\end{equation}
where the $m_k$'s are
integers and $\alpha_k\in \Z^*_T$. So giving an axial function $\alpha\colon E \to \Z^*_{T}$
is equivalent to giving a map which assigns to each edge $e\in E$ a one dimensional representation $\rho_e$,
whose character $\chi_e\colon T \to S^{1}$ is given by 
$$\chi_e(e^{2\pi\sqrt{-1}\xi})=e^{2\pi\sqrt{-1} \alpha(e)(\xi)}\;.$$
For every $e\in E$, let $T_e=\ker(\chi_e)$, and consider the restriction map 
$$r_e\colon R(T)\to R(T_e)\;.$$
Then for every vertex $p\in V$, we also obtain a $d$-dimensional representation $$\nu_p\simeq \bigoplus_{e\in E_p}\rho_e$$
which, by definition \ref{connection} (3) satisfies
\begin{equation}\label{restriction repr}
r_e(\nu_{i(e)})\simeq r_e(\nu_{t(e)})\;.
\end{equation}
So an integral axial function $\alpha\colon E\to \Z_T^*$ defines a one-dimensional representation $\rho_e$
for every edge $e\in E$ and for every $p\in V$ a $d$-dimensional representation $\nu_p$ satisfying \eqref{restriction repr};
this is what we refer to as a \emph{$T$-action on} $\Gamma$.\\$\;$
\begin{remark}
Henceforth in this article all GKM graphs will, unless otherwise specified, be \emph{integral} GKM graphs.
\end{remark}
We will now define the $K$-ring $K_{\alpha}(\Gamma)$ of $(\Gamma,\alpha)$.
As we remarked in the introduction, Knutson and Rosu have proved that if 
 $(\G,\alpha)$ is the GKM graph associated to a GKM manifold $M$, then
$$
K_{\alpha}(\G)\simeq K_T(M)\;,
$$
where $K_T(M)$ is the equivariant $K$-theory ring of $M$ (cf. \cite{R}).

Let $\maps(V,R(T))$ be the ring of maps which assign to each vertex $p\in V$ a representation of $T$.
Following the argument in \cite{R},
we define a subring of $\maps(V,R(T))$, called the ring of \textit{$K$-classes of} $(\Gamma,\alpha)$.
\begin{defin}
Let $f$ be an element of $\maps(V,R(T))$. Then $f$ is  a $K$-class of $(\Gamma,\alpha)$ if 
for every edge $e=(p,q)\in E$
\begin{equation}\label{defin K}
r_e(f(p))=r_e(f(q))\;.
\end{equation}
\end{defin}
Observe that using the identification of $R(T)$ with the ring of finite sums \eqref{rep ring}, condition
\eqref{defin K} is equivalent to saying that for every $e=(p,q)\in E$
\begin{equation}\label{comp cond}
f(p)-f(q)=\beta(1-e^{2\pi\sqrt{-1} \alpha(e)}),
\end{equation}
for some $\beta$ in $R(T)$. 

If $f$ and $g$ are two $K$-classes, then also $f +g$ and $fg$ are; so the set of $K$-classes is a subring of
 $\text{Maps}(V,R(T))$.
\begin{defin}
The \emph{$K$-ring} of $(\Gamma,\alpha)$, denoted by $K_{\alpha}(\Gamma)$, is the subring of $\text{Maps}(V,R(T))$ consisting of all the $K$-classes.
\end{defin}
 
 %%%
 
\section{GKM fiber bundles}
Let $(\G_1,\alpha_1)$ and $(\G_2,\alpha_2)$ be GKM graphs, where $\G_1=(V_1,E_1)$, $\G_2=(V_2,E_2)$, $\alpha_1\colon E_1 \to \Z_{T_1}^*\subset \gd_1$
and $\alpha_2\colon E_2\to \Z_{T_2}^*\subset \gd_2$. 
\begin{defin}
An \textit{isomorphism of GKM graphs}
$(\G_1,\alpha_1)$ and $(\G_2,\alpha_2)$ is
a pair $(\Phi,\Psi)$, where $\Phi\colon \G_1\to\G_2$
is an isomorphism of graphs, and $\Psi\colon \gd_1\to\gd_2$
is an isomorphism of linear spaces such that
 $\Psi(\Z_{T_1}^*)=\Z_{T_2}^*$, and for every edge
 $(p,q)$ of $\G_1$ we have $\alpha_2(\Phi(p),\Phi(q))=\Psi (\alpha_1(p,q))$.
\end{defin}
By definition, if $(\Phi,\Psi)$ is an isomorphism of GKM graphs, the diagram
\begin{equation}\label{iso commutes}
\xymatrix{
E_1 \ar[r]^{\Phi} \ar[d]_{\alpha_1} & E_2 \ar[d]_{\alpha_2} \\
\Z_{T_1}^*  \ar[r]^{\Psi|_{\Z_{T_1}^*}} &  \Z_{T_2}^*
 } \
\end{equation}
commutes. We can extend the map $\Psi$ to be a ring homomorphism from $R(T_1)$ to $R(T_2)$ using the
identification \eqref{rep ring} and defining $\Psi( e^{2\pi\sqrt{-1}\alpha})= e^{2\pi\sqrt{-1}\Psi(\alpha)}$,
for every $\alpha\in \Z_{T_1}^*$.

Given $f\in \maps(V_2, R(T_2))$, let $\Upsilon^*(f)\in\maps(V_1,R(T_1))$
be the map defined by $\Upsilon^*(f)(p)=\Psi^{-1}(f(\Phi(p)))$.
From the commutativity of the diagram \eqref{iso commutes}
it's easy to see that 
if $f\in K_{\alpha_2}(\G_2)$ then $\Upsilon^*(f)\in K_{\alpha_1}(\G_1)$, and
$\Upsilon^*$ defines an isomorphism between the two $K$-rings. 

We are now ready to define the main combinatorial objects of this paper, 
GKM fiber bundles. Let $(\G,\alpha)$ and $(\G_B,\alpha_B)$ be two GKM graphs, 
with $\alpha$ and $\alpha_B$ having images in the same weight lattice $\Z_T^*$. 
Let $\pi\colon \G=(V,E) \to \G_B=(V_B,E_B)$ be a surjective morphism of graphs. By that we mean 
that $\pi$ maps the vertices of $\G$ onto the vertices of $\G_B$, such that, for every edge 
$e=(p,q)$ of $\G$, either $\pi(p)=\pi(q)$ (in which case $e$ is called \emph{vertical}), 
or $(\pi(p), \pi(q))$ is an edge of $\G_B$ (in which case $e$ is called \emph{horizontal}). 
Such a morphism of graphs 
induces a map $(d\pi)_p \colon H_p \to E_{\pi(p)}$ from the set of horizontal edges at 
$p \in V$ to the set of all edges starting at $\pi(p) \in V_B$. The first condition we
impose for $\pi$ to be a GKM fiber bundle is the following:

\begin{description}
\item[1] For all vertices $p \in V$, $(d\pi)_p \colon H_p \to E_{\pi(p)}$ is a bijection compatible with the axial functions:
$$\alpha_B ((d\pi)_p(e))= \alpha(e), $$
for all $e=(p,q) \in H_p$.
\end{description}
The second condition has to do with the connection on $\G$ and $\G_B$.
\begin{description}
\item[2] The connection along edges of $\Gamma$ moves horizontal 
edges to horizontal edges, and vertical edges to vertical edges. Moreover, 
the restriction of the connection of $\Gamma$ to horizontal edges 
is compatible with the connection on $\G_B$.
\end{description}
For every vertex $p\in \G_B$, let $V_p=\pi^{-1}(p)\subset V$ and $\G_p$ 
the induced subgraph of $\G$ with vertex set $V_p$. If the map $\pi$ satisfies 
condition \textbf{1}, then, for every edge $e=(p,q)$ of $\G_B$, it induces a 
bijection $\Phi_{p,q} \colon V_p \to V_q$ by $\Phi_{p,q} (p') = q'$ if and only if
$(p,q) = (d\pi)_p(p',q')$.  
\begin{description}
\item[3] For every edge $(p,q)\in \G_B$, $\Phi_{p,q}\colon \G_p\to\G_q$
is an isomorphism of graphs compatible with the connection $\nabla$ on $\G$ in the following sense:
for every lift $e'=(p_1,q_1)$ of $e=(p,q)$ at $p_1$ and every edge $e''=(p_1,p_2)$ of $\G_p$
the connection along the horizontal edge $(p_1,q_1)$ moves the vertical edge $(p_1,p_2)$
to the vertical edge $(q_1,q_2)$, where $q_i=\Phi_{p,q}(p_i)$, $i=1,2$.
\end{description}

We can endow $\G_p$ 
with a GKM structure, which is just the restriction of the GKM structure 
of $(\Gamma,\alpha)$ to $\G_p$. The axial function on $\G_p$ 
is the restriction of $\alpha\colon E\to \Z_{T}^*$ to the edges of $\G_p$; 
we refer to it as $\alpha_p$, and it takes values in $\mathfrak{v}_p^*$, the subspace of 
$\gd$ generated by values of axial functions $\alpha(e)$, for edges $e$ of $\G_p$. 
The next condition we impose on $\pi$ is the following:

\begin{description}
\item[4] For every edge $(p,q)$ of $\G_B$, there exists an isomorphism of GKM graphs
$$
 \Upsilon_{p,q}=(\Phi_{p,q},\Psi_{p,q})\colon (\G_p,\alpha_p)\to (\G_q,\alpha_q)\;.
 $$
% with
 %$$
 %\Psi_{p,q}(x)=x+c(x)\alpha_B(p,q)\;
 %$$ 
 %
 %for some $\Z$-linear function $c\colon \mathfrak{v}_p^*\cap \Z_T^*\to \Z$.
  %
\end{description}
 By property (3) of definition \ref{connection} and the fact that $\alpha_B(e)=\alpha(e')$ we have 
 \begin{equation}
 \alpha(\Phi_{p,q}(p_1),\Phi_{p,q}(p_2))-\alpha(p_1,p_2)=m'(p_1,p_2)\alpha_B(e)\;,
 \end{equation}
 where $m'(p_1,p_2)$ is an integer; so by the commutativity of \eqref{iso commutes}
 we have
 \begin{equation}\label{Psi compatible}
 \Psi_{p,q}(\alpha(p_1,p_2))-\alpha(p_1,p_2)=m'(p_1,p_2)\alpha_B(e)\;.
 \end{equation}
 Condition \eqref{Psi compatible} defines a map $m'\colon E_p\to \Z$, where $E_p$ is
 the set of edges of $\G_p$.
The next condition is a strengthening of this.
\begin{description}
\item[5] There exists a function $m\colon \mathfrak{v}_p^*\cap \Z_T^*\to \Z$ such that
\begin{equation}\label{Psi iso}
 \Psi_{p,q}(x)=x+m(x)\alpha_B(p,q)\;.
 \end{equation}
\end{description} 
 Observe that if $\G_B$ is connected, then all the fibers of $\pi$ are isomorphic as GKM graphs.
 More precisely, for any two vertices $p,q\in V_B$ let $\gamma$ be a path in $\G_B$ from
 $p$ to $q$, i.e. $\gamma\colon p=p_0\to p_1\to \cdots\to p_m=q$.
 Then the map
 $$
 \Upsilon_{\gamma}=\Upsilon_{p_{m-1},p_{m}}\circ \cdots\circ\Upsilon_{p_0,p_1}\colon(\G_p,\alpha_p)\to(\G_q,\alpha_q)\;.
 $$
 defines an isomorphism between the GKM graphs $(\G_p,\alpha_p)$ and $(\G_q,\alpha_q)$.
 As observed before, this isomorphism restricts to an isomorphism between the two $K$-rings,
 $\Upsilon_{\gamma}^*\colon K_{\alpha_q}(\G_q)\to K_{\alpha_p}(\G_p)$, which is not an 
 isomorphism of $R(T)$-modules, unless the linear isomorphism 
 $$\Psi_{\gamma}=\Psi_{p_0,p_1}\circ\cdots\circ \Psi_{p_{m-1},p_m}\colon
 \mathfrak{v}_q^*\to \mathfrak{v}_p^*$$ 
 is the identity.
 
 Let $\Omega(p)$ be the set of all loops in $\G_B$ that start and end at $p$, i.e. the set
 of paths $\gamma\colon p_0\to p_1\to \cdots \to p_{m-1} \to p_m$ such that $p_0=p_m=p$.
 Then every such $\gamma$ 
 determines a GKM isomorphism $\Upsilon_{\gamma}$ of the fiber $(\G_p,\alpha_p)$.
 The \emph{holonomy group}
 of the fiber $(\G_p,\alpha_p)$ is the subgroup of the GKM isomorphisms of the fiber, $\aut(\G_p,\alpha_p)$, given by
  $$
 \hol_{\pi}(\G_p)=\{\Upsilon_{\gamma}\mid \gamma\in \Omega(p)\}\leqslant \aut(\G_p,\alpha_p)\;.
 $$ 
% insert
 
\section{Flag manifolds as GKM fiber bundles}\label{flag GKM}
In this section we will discuss some important examples of GKM fiber bundles
coming from generalized partial flag varieties.
 
Let $G$ be a complex semisimple Lie group, $\fg$ its Lie algebra, 
$\fh\subset\fg$ a Cartan subalgebra and $\ft\subset \fh$ a compact real form;
let $T$ be the compact torus whose Lie algebra is $\ft$.
Let $\Delta\subset \Z_T^*\subset \ft^*$ be the set of roots and
$$
\fg=\fh\oplus \bigoplus_{\alpha\in \Delta}\fg_{\alpha}
$$
the Cartan decomposition of $\fg$.
Let $\Delta_0=\{\alpha_1,\ldots,\alpha_n\}\subset \Delta$ be a choice of simple roots
and $\Delta^+$ the corresponding positive roots. 
The set of positive roots determines a Borel subalgebra $\fb\subset \fg$ given by
$$
\fb=\fh\oplus\bigoplus_{\alpha\in \Delta^+}\fg_{\alpha}\;.
$$
Let $\Sigma\subset \Delta_0$ be a subset of simple roots, and $B\leqslant P(\Sigma)\leqslant G$,
where $B$ is the Borel subgroup whose Lie algebra is $\fb$ and
$P(\Sigma)$
the parabolic subgroup of $G$ corresponding to $\Sigma$.
Then $M=G/P(\Sigma)$ is a (generalized) partial flag manifold. In particular, if $\Sigma=\emptyset$, then $P(\emptyset)=B$,
and  we refer to
$M=G/B$ as the (generalized) complete flag manifold.

The torus $T$ with Lie algebra $\ft$ acts on $M=G/P(\Sigma)$ by left multiplication on $G$;
this action determines a GKM structure on $M$ with GKM graph $(\Gamma,\alpha)$.
In fact, let $W$ be the Weyl group of $\fg$ and $W(\Sigma)$ the subgroup of $W$
generated by reflections across the simple roots in $\Sigma$, and $\langle \Sigma \rangle$
the positive roots which can be written as a linear combination of the roots in $\Sigma$.
Then the vertices of $\Gamma$, corresponding to the $T$-fixed points, are in bijection
with the right cosets 
$$
W/W(\Sigma)=\{vW(\Sigma)\mid v\in W\}=\{[v]\mid v\in W\}\;,
$$ 
where $[v]=vW(\Sigma)$ is the right $W(\Sigma)$-coset containing $v$.
Two vertices $[v]$ and $[w]$ are joined by an edge if and only if
there exists $\beta\in \Delta^+\setminus \langle \Sigma\rangle$ 
such that $[v]=[ws_{\beta}]$; moreover the axial function $\alpha$ on the edge
$e=([v],[vs_{\beta}])$ is given by $\alpha([v],[vs_{\beta}])=v\beta$; it's easy
to see that this label is well defined.
Moreover, given an edge $e'=([v],[vs_{\beta'}])$ starting at $[v]$, 
a natural choice of a connection along $e$ is given by
$\nabla_ee'=([vs_{\beta}],[vs_{\beta}s_{\beta'}])$.
So 
$$
\alpha(\nabla_ee')-\alpha(e')=vs_{\beta}\beta'-v\beta'=s_{v\beta}v \beta'-v \beta'=m(e,e') v \beta\;,
$$
where $m(e,e')$ is a Cartan integer; so this connection satisfies property (3)
of definition \ref{connection}.

Consider the natural 
$T$-equivariant 
projection $\pi\colon G/B\to G/P(\Sigma)$; this map induces a projection map
on the corresponding GKM graphs $\pi\colon (\Gamma,\alpha)\to (\Gamma_B,\alpha_B)$
given by $\pi(w)=[w]$ for every $w\in W$, where $[w]=wW(\Sigma)$.
As we proved in \cite[section 4.3]{GSZ1}, $\pi$ is a GKM fiber bundle.
Moreover, let $\Gamma_{0}$ be the GKM graph of the fiber containing the identity
element of $W$; then $$\hol_{\pi}(\Gamma_0)\simeq W(\Sigma).$$
The key ingredient in the proof of this is the identification of the weights of the $T$ action
on the tangent space to the identity coset $p_0$ of $G/P$ with the complement
of $\langle \Sigma \rangle$ in $\Delta^+$. Namely for any such root
$\alpha$ let $w\in W$ be the Weyl group element $w\colon \mathfrak{t}\to \mathfrak{t}$
associated with the reflection in the hyperplane $\alpha=0$.
Then for the edge $e=(p_0,wp_0)$ of the GKM graph of $G/P$ the GKM isomorphism
of $\G_{p_0}$ onto $\G_p$ is the isomorphism $\Phi_{p_0,p}$ associated with the left
action of $w$ on $G/B$ and the $T$ automorphism \eqref{Psi iso} is just the action of $w$
on $\mathfrak{t}$ given by compositions of reflections in the hyperplane
$\alpha_B(p,q)=0$ in $\mathfrak{t}$, for some horizontal edge $(p,q)$. 
(These results are also a byproduct of the identification of the GKM model of $K_T(G/P)$
with the Kostant-Kumar model which we will describe in section \ref{more general}).

\section{$K$-theory of GKM fiber bundles}\label{sec k-theory}
Given a GKM fiber bundle $\pi\colon (\G,\alpha)\to (\G_B,\alpha_B)$,
 we will describe the $K$-ring of $(\G,\alpha)$ in terms of the $K$-ring
of $(\G_B,\alpha_B)$.
 In the proof of the main theorem we will need the following technical lemma.
\begin{lemma}\label{indep weights}
Let $\alpha$ and $\beta$ be linearly independent weights in $\gd$, and  $P$ an element of $R(T)$ . If $1-e^{2\pi\sqrt{-1}\alpha}$ divides $(1-e^{2\pi\sqrt{-1}\beta})P$ then $1-e^{2\pi\sqrt{-1}\alpha}$ divides $P$.
\end{lemma}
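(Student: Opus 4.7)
The plan is to identify $R(T)\cong\Z[t_1^{\pm 1},\ldots,t_n^{\pm 1}]$ via $t^\gamma:=e^{2\pi\sqrt{-1}\gamma}$, extend scalars to $\C$, and show that $V(1-t^\alpha)\subset(\C^{*})^n$ is a reduced subvariety on which $1-t^\beta$ does not vanish identically. Once this is done, $(1-t^\beta)P$ vanishing on $V(1-t^\alpha)$ will force $P$ to vanish on a Zariski dense subset of each irreducible component, hence on all of $V(1-t^\alpha)$, yielding $1-t^\alpha\mid P$.

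First I would write $\alpha=c\alpha_0$ with $\alpha_0\in\Z_T^*$ primitive, so that
$$
1 - t^\alpha \;=\; 1-(t^{\alpha_0})^c \;=\; \prod_{d\mid c}\Phi_d(t^{\alpha_0}).
$$
Since the cyclotomic polynomials $\Phi_d$ are pairwise coprime, $1-t^\alpha$ is squarefree in $\C[t^{\pm}]$, so the ideal $(1-t^\alpha)$ is radical. By the Nullstellensatz, $1-t^\alpha$ divides an $f\in\C[t^{\pm}]$ iff $f$ vanishes on $V(1-t^\alpha)$; and because $1-t^\alpha$ has coefficients in $\{-1,0,1\}$, divisibility by $1-t^\alpha$ over $\Z$ and over $\C$ coincide, so it suffices to work over $\C$.

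Next I would describe the geometry: $V(1-t^\alpha)=\{t:(t^{\alpha_0})^c=1\}$ decomposes into the disjoint union of translates $C_\eta:=\{t:t^{\alpha_0}=\eta\}$, one for each $c$-th root of unity $\eta$. Each $C_\eta$ is a coset of the connected $(n-1)$-dimensional subtorus $\ker\alpha_0$ (connectedness uses primitivity of $\alpha_0$), hence irreducible. The key geometric claim is that $1-t^\beta$ does not vanish identically on any $C_\eta$. Indeed, parameterizing $C_\eta=t_0\cdot\ker\alpha_0$ with $t_0^{\alpha_0}=\eta$, we get $t^\beta=t_0^\beta s^\beta$ for $s\in\ker\alpha_0$; if $t^\beta\equiv 1$ on $C_\eta$, the character $s\mapsto s^\beta$ is constant on the connected torus $\ker\alpha_0$, which forces $\beta$ to vanish on $\ker\alpha_0$, i.e. $\beta$ is proportional to $\alpha_0$. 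But $\alpha$ is also proportional to $\alpha_0$, contradicting the linear independence of $\alpha$ and $\beta$.

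Putting the pieces together: $(1-t^\beta)P$ vanishes on $V(1-t^\alpha)$; on each irreducible component $C_\eta$, the factor $1-t^\beta$ is not identically zero, so $P$ vanishes on a dense open subset of $C_\eta$ and, by irreducibility, on all of $C_\eta$. Hence $P$ vanishes on $V(1-t^\alpha)$, and the Nullstellensatz step yields $1-t^\alpha\mid P$ in $R(T)$. The main obstacle to watch is the case when $\alpha$ is non-primitive: this is exactly where the cyclotomic factorization is needed, both to establish squarefreeness (hence reducedness of $V(1-t^\alpha)$) and to see that the individual components $C_\eta$ are actual translates of a connected subtorus, on which the character $t\mapsto t^\beta$ can be analyzed linearly.
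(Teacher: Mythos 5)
Your proof is correct and follows essentially the same route as the paper's: both arguments come down to the observation that $1-e^{2\pi\sqrt{-1}\beta}$ cannot vanish identically on the zero locus of $1-e^{2\pi\sqrt{-1}\alpha}$ (this is exactly where linear independence enters), forcing $P$ to vanish there. The paper implements this by writing $\alpha=m\alpha_1$ with $\alpha_1$ primitive, completing to a lattice basis, and evaluating at points with $x_1^m=1$; your version merely makes explicit the squarefreeness of $1-t^{\alpha}$, the irreducibility of the components as subtorus cosets, and the descent of divisibility from $\C$ back to $\Z$ --- steps the paper leaves implicit.
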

\begin{proof}
Let $\alpha=m\alpha_1$, for some $m\in \Z$, where $\alpha_1$ is a primitive element of the weight lattice in $\gd$.
We can complete $\alpha_1$ to a basis $\{\alpha_1,\alpha_2,\ldots,\alpha_d\}$ of the lattice. Let
$x_j=e^{2\pi\sqrt{-1}\alpha_j}$ for all $j=1,\ldots,d$. \\Then by hypothesis $(1-x_1^m)$ divides $(1-x_1^nQ(x_2,\ldots,x_d))P(x_1,\ldots,x_d)$ for some non-constant polynomial $Q(x_2,\ldots,x_d)$ . Consider an element $\xi\in \g\otimes \C$ such that $x_1(\xi)^m=1$; then $(1-x_1^nQ(x_2,\ldots,x_d))P(x_1,\ldots,x_d)(\xi)=0$.
Since in general $(1-x_1^nQ(x_2,\ldots,x_d))(\xi)\neq 0$, this implies that $(1-x_1^m)$ divides $P(x_1,\ldots,x_d)$.
\end{proof}
For every $K$-class $f\colon V_B\to R(T)$,
define the pull-back $\pi^{*}(f)\colon V\to R(T)$ by $\pi^{*}(f)(q)=f(\pi(q))$. It's easy to check that 
 $\pi^{*}(f)$ is a $K$-class on $(\Gamma,\alpha)$.
So $K_{\alpha}(\Gamma)$ contains $K_{\alpha_B}(\G_B)$ as a subring, and the map $\pi^{*}\colon K_{\alpha_B}(\G_B)\to K_{\alpha}(\Gamma)$ gives $K_{\alpha}(\Gamma)$ the structure of a $K_{\alpha_B}(\G_B)$-module.
\begin{defin}
A $K$-class $h\in K_{\alpha}(\Gamma)$ is called \textit{basic} if $h\in \pi^*(K_{\alpha_B}(\G_B))$.
\end{defin}
We denote the subring of basic $K$-classes by $(K_{\alpha}(\Gamma))_{bas}$;
clearly we have $$(K_{\alpha}(\Gamma))_{bas}\simeq K_{\alpha_B}(\G_B)\;.$$
%The next theorem is the $K$-theoretic analog of Theorem 3.1 in \cite{GSZ1}.

\begin{theorem}\label{leray-serre}
Let $\pi:(\Gamma,\alpha)\to (\G_B,\alpha_B)$ be a GKM fiber bundle, and let $c_1,\ldots,c_m$ be $K$-classes
on $\Gamma$ such that for every $p\in V_B$ the restriction of these classes to the fiber $\Gamma_p=\pi^{-1}(p)$
form a basis for the $K$-ring of the fiber.
Then, as $K_{\alpha_B}(\G_B)$-modules, $K_{\alpha}(\Gamma)$ is isomorphic to the free $K_{\alpha_B}(\G_B)$-module on $c_1,\ldots,c_m$.
\end{theorem}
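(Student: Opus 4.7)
The plan is to show that the $K_{\alpha_B}(\G_B)$-module homomorphism
$$\phi\colon \bigoplus_{i=1}^{m} K_{\alpha_B}(\G_B) \longrightarrow K_\alpha(\Gamma), \qquad (f_1,\ldots,f_m) \longmapsto \sum_i \pi^{*}(f_i)\, c_i,$$
is a bijection. Injectivity is immediate: if $\sum_i \pi^{*}(f_i)\, c_i = 0$, then restricting to each fiber $\Gamma_p$ yields $\sum_i f_i(p)\, c_i|_{\Gamma_p} = 0$ in $K_{\alpha_p}(\Gamma_p)$, and the hypothesis that $\{c_i|_{\Gamma_p}\}$ is an $R(T)$-basis forces $f_i(p)=0$ for every $i$ and $p$.

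For surjectivity, given $h \in K_\alpha(\Gamma)$ I would first use the basis hypothesis fiberwise to define $f_i \colon V_B \to R(T)$ by expanding $h|_{\Gamma_p} = \sum_i f_i(p)\, c_i|_{\Gamma_p}$ at each $p \in V_B$; this is forced by uniqueness of the basis expansion. The entire content of the theorem is the claim that each such $f_i$ satisfies the $K$-class compatibility on $\Gamma_B$. So fix an edge $e=(p,q)$ of $\Gamma_B$. By condition \textbf{1}, each $q' \in V_q$ has a unique horizontal lift $(p',q')$ of $e$ with $\alpha(p',q') = \alpha_B(e)$. Substituting the fiberwise basis expansions of $h(p')$ and $h(q')$ and using the $K$-class relations for both $h$ and the $c_i$ along $(p',q')$, a short manipulation yields
$$\sum_i \bigl(f_i(p) - f_i(q)\bigr)\, c_i(q') \;\equiv\; 0 \pmod{1 - e^{2\pi\sqrt{-1}\alpha_B(e)}}$$
for every $q' \in V_q$. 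Setting $b_i := f_i(p) - f_i(q) \in R(T)$, the remaining task is to show that each $b_i$ is divisible by $1 - e^{2\pi\sqrt{-1}\alpha_B(e)}$ individually.

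The key step is to upgrade this pointwise divisibility to a divisibility statement inside $K_{\alpha_q}(\Gamma_q)$. The element $X := \sum_i b_i\, c_i|_{\Gamma_q}$ is a $K$-class on $\Gamma_q$ all of whose values are divisible by $1-e^{2\pi\sqrt{-1}\alpha_B(e)}$, so I may define $Y \colon V_q \to R(T)$ pointwise by $Y(q') = X(q')/(1-e^{2\pi\sqrt{-1}\alpha_B(e)})$. To verify that $Y \in K_{\alpha_q}(\Gamma_q)$, I would take any edge $e''=(q_1,q_2)$ of $\Gamma_q$; the $K$-class relation for $X$ along $e''$ becomes
$$(1-e^{2\pi\sqrt{-1}\alpha_B(e)})\bigl(Y(q_1)-Y(q_2)\bigr) \in \bigl(1-e^{2\pi\sqrt{-1}\alpha_q(e'')}\bigr)R(T),$$
and Lemma \ref{indep weights}, applied to the weights $\alpha_q(e'')$ and $\alpha_B(e)$, strips off the factor $1-e^{2\pi\sqrt{-1}\alpha_B(e)}$ to give the required compatibility for $Y$. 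Once $Y$ is known to be a $K$-class on $\Gamma_q$, I would expand $Y = \sum_i y_i\, c_i|_{\Gamma_q}$ in the basis; the identity $X = (1-e^{2\pi\sqrt{-1}\alpha_B(e)})\, Y$, combined with uniqueness of this basis expansion, forces $b_i = (1-e^{2\pi\sqrt{-1}\alpha_B(e)})\, y_i$, which is exactly the divisibility we need.

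The main obstacle, and essentially the only place where the GKM \emph{fiber bundle} structure enters in a nontrivial way, is verifying the linear independence hypothesis of Lemma \ref{indep weights}. Here I would invoke condition \textbf{1} at the initial vertex $q_1$ of $e''$: the horizontal lift at $q_1$ of the reverse edge $(q,p)$ carries the weight $-\alpha_B(e)$, so $\alpha_B(e)$ appears, up to sign, among the weights of edges outgoing from $q_1$. Condition (2) of Definition \ref{connection} then forces $\alpha_B(e)$ and the vertical weight $\alpha_q(e'')$ to be linearly independent. I expect this to be the subtle point to pin down carefully; once it is in place, the rest of the argument is bookkeeping around the fiberwise basis hypothesis and Lemma \ref{indep weights}.
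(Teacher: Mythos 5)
Your proposal is correct and follows essentially the same route as the paper's proof: independence by restriction to fibers, fiberwise expansion of an arbitrary class, and then the key step of dividing the auxiliary fiber class (your $Y$, the paper's $\eta$) by $1-e^{2\pi\sqrt{-1}\alpha_B(e)}$ and verifying it is a $K$-class via Lemma \ref{indep weights}, whose linear-independence hypothesis is supplied exactly as you say by condition (2) of Definition \ref{connection} applied at a vertex where the horizontal lift and the vertical edge meet. The only (immaterial) difference is that you collect the sum on the fiber $\Gamma_q$ rather than $\Gamma_p$.
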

\begin{proof}

First of all, observe that any linear combination of $c_1,\ldots,c_m$ with coefficients in $(K_{\alpha}(\Gamma))_{bas}\simeq K_{\alpha_B}(\G_B)$
is an element of $K_{\alpha}(\Gamma)$. Now we want to prove that the $c_i$'s are independent over $K_{\alpha_B}(\G_B)$.
In order to prove so, let
$
\sum_{k=1}^m \beta_k c_k=0
$ for some $\beta_1,\ldots,\beta_m\in (K_{\alpha}(\Gamma))_{bas}$. Let $\Gamma_p=\pi^{-1}(p)$ denote the fiber over $p\in B$, $\iota_p:\Gamma_p\to \Gamma$
the inclusion, and $\iota_p^{*}\colon K_{\alpha}(\Gamma)\to K_{\alpha}(\Gamma_p)$ the restriction to the $K$-theory of the fiber. Then  $
\sum_{k=1}^m\iota_p^{*}( \beta_k c_k)=0
$ for all $p\in B$. Since the $\beta_k$'s are basic $K$-classes, $\iota_p^{*}(\beta_k)$ is just an element of $R(T)$ for all $k$. But by assumption
$\{\iota_p^{*}(c_1),\ldots,\iota_p^{*}(c_m)\}$ is a basis of $K_{\alpha}(\Gamma_p)$; so  $\iota_p^{*}(\beta_k)=0$ for all $k=1,\ldots,m$, for all $p\in \G_B$, which implies that $\beta_k=0$ for all $k$. We need to prove that the free $K_{\alpha_B}(\G_B)$-module generated by $c_1,\ldots,c_m$ is $K_{\alpha}(\Gamma)$.

Let $c\in K_{\alpha}(\Gamma)$. Since the classes $\iota_p^{*}c_1,\ldots,\iota_p^{*}c_m$ are a basis for $K_{\alpha}(\Gamma_p)$, there exist
$\beta_1,\ldots,\beta_m\in \text{Maps}( B, R(T))$ such that
$$
c=\sum_{k=1}^m \beta_k c_k\; ;
$$
we need to prove that the $\beta_k$'s belong to $(K_{\alpha}(\Gamma))_{bas}$ for all $k$. In order to prove this, it is sufficient
to show that \eqref{comp cond} is satisfied for every edge $(p,q)$ of $\G_B$.
Let $e'=(p',q')$ be the lift of $(p,q)$ at $p'\in \Gamma_p$. Then 
\begin{align*}
c(q') - c(p\,') = & \sum_{k=1}^m (\beta_k(q) c_k(q') - \beta_k(p) c_k(p\,')) \\
 = & \sum_{k=1}^m (\beta_k(q)-\beta_k(p)) c_k(p\,') +
 \sum_{k=1}^m \beta_k(q) (c_k(q') - c_k(p\,')) \; .
\end{align*}
Since $c, c_1, \ldots ,c_m$ belong to $K_{\alpha}(\Gamma)$, by \eqref{comp cond} the differences $c(q') - c(p\,')$, $c_k(q') - c_k(p\,')$ are multiples of $(1-e^{2\pi\sqrt{-1}\alpha(e')})$, for all $k=1,\ldots, m$. Therefore, for all $p\,' \in \Gamma_{p}$,
$$ \sum_{k=1}^m (\beta_k(q)-\beta_k(p)) c_k(p\,') =(1-e^{2\pi\sqrt{-1} \alpha(e')}) \eta(p\,') \; ,$$
where $\eta(p\,') \in R(T)$. We will show that $\eta \colon \Gamma_{\!p} \to R(T)$ belongs to $K_{\alpha}(\Gamma_{\!p})$.

If $p\,'$ and $p\,''$ are vertices in $\Gamma_{\!p}$, joined by an edge $(p\,',p\,'')$, then
$$ \sum_{k=1}^m (\beta_k(q)-\beta_k(p)) (c_k(p\,'')-c_k(p\,')) =
(1-e^{2\pi\sqrt{-1}\alpha(e')}) (\eta(p\,'')- \eta(p\,')) \; .$$
Each $c_k$ is a $K$-class on $\Gamma$, so $c_k(p'')-c_k(p')$ is a multiple of $(1-e^{2\pi\sqrt{-1}\alpha(p',p'')})$, for all $k=1,\ldots, m$. Then $(1-e^{2\pi\sqrt{-1}\alpha(p',p'')})$ divides $(1-e^{2\pi\sqrt{-1}\alpha(e')}) (\eta(p\,'')- \eta(p\,'))$. But $\alpha(e')$ and $\alpha(p',p'')$ are linearly independent vectors. Therefore, by Lemma \ref{indep weights}, $(1-e^{2\pi\sqrt{-1}\alpha(p',p'')})$ divides $\eta(p'')- \eta(p')$, and so $\eta$ is a $K$-class on $\Gamma_{p}$.

Since the classes $\iota_p^{*}c_1,\ldots,\iota_p^{*}c_m$ are a basis for $K_{\alpha}(\Gamma_{\!p})$ there exist $Q_1,\ldots, Q_m \in R(T)$ such that
$$\eta= \sum_{k=1}^m Q_k \iota_p^*c_k\; .$$
Then
$$\sum_{k=1}^m (\beta_k(q) - \beta_k(p) - Q_k(1-e^{2\pi\sqrt{-1}\alpha(e')}) )\iota_p^* c_k = 0\;. $$
But $\iota_p^*c_1,\ldots,\iota_p^* c_m$ are linearly independent over $R(T)$, so
$$\beta_k(q) - \beta_k(p) = Q_k(1-e^{2\pi\sqrt{-1}\alpha(e')})  \;.$$
Since $\alpha(e')=\alpha(p',q')=\alpha_B(p,q)$, this
implies that $\beta_k \in K_{\alpha_B}(\G_B)$. Therefore every $K$-class on $\Gamma$ can be written as a linear combination of classes $c_1,\ldots,c_m$, with coefficients in $K_{\alpha_B}(\G_B)$.
\end{proof}

\section{Invariant classes}\label{inv classes}
Let $\pi\colon(\G,\alpha)\to(\G_B,\alpha_B)$ be a GKM fiber bundle, and let $(\G_p,\alpha_p)$ be
one of its fibers. We say that $f\in K_{\alpha_p}(\G_p)$ is an \emph{invariant class} if $\Upsilon_{\gamma}^*(f)=f$ for every
$\Upsilon_{\gamma}\in \hol_{\pi}(\G_p)$. We denote by $(K_{\alpha_p}(\G_p))^{\hol}$ the subring of $K_{\alpha_p}(\G_p)$
given by invariant classes.

Given any such class $f\in (K_{\alpha_p}(\G_p))^{\hol}$, we can extend it to be an element of $\maps\colon V\to R(T)$
by the following recipe: let $q$ be a vertex of $\G_B$, and $\gamma$ a path in $\G_B$ from $q$ to $p$. Let $\Upsilon_{\gamma}\colon
(\G_q,\alpha_q)\to(\G_p,\alpha_p)$ be the isomorphism of GKM graphs associated to $\gamma$;
then, as we observed before, $\Upsilon_{\gamma}^*(f)$ defines an element of $K_{\alpha_q}(\G_q)$.
Notice that the invariance of $f$ implies that $\Upsilon_{\gamma}^*(f)$ only depends on the end-points of $\gamma$;
so we denote $\Upsilon_{\gamma}^*(f)$ by $f_q$.
\begin{prop}
Let $c\in\maps(V,R(T))$ be the map defined by 
$c(q')=f_{\pi(q')}(q')$ for any $q'\in V$. Then $c\in K_{\alpha}(\G)$. 
\end{prop}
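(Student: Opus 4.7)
The plan is to verify the $K$-class compatibility condition \eqref{comp cond} for $c$ on every edge of $\G$, treating vertical and horizontal edges separately.

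For a vertical edge $e=(q_1',q_2')$ with $\pi(q_1')=\pi(q_2')=q$, both endpoints lie in the fiber $\G_q$, so $c(q_i')=f_q(q_i')$. Since $f_q\in K_{\alpha_q}(\G_q)$ and $\alpha_q$ is the restriction of $\alpha$ to edges of $\G_q$, the compatibility of $f_q$ across $e$ immediately yields the required divisibility of $c(q_1')-c(q_2')$ by $1-e^{2\pi\sqrt{-1}\alpha(e)}$.

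The horizontal case is the substantive one. Let $e'=(p_1',q_1')$ project to $(p_1,q_1)\in E_B$, so by axiom \textbf{1} we have $\alpha(e')=\alpha_B(p_1,q_1)$, and by the definition of $\Phi_{p_1,q_1}$ we have $\Phi_{p_1,q_1}(p_1')=q_1'$. The key identity to establish is
\begin{equation*}
f_{p_1}=\Upsilon_{p_1,q_1}^{*}(f_{q_1})\;,
\end{equation*}
which follows by picking a path $\gamma\colon q_1\to p$ in $\G_B$, prepending the edge $(p_1,q_1)$ to form $\tilde\gamma\colon p_1\to p$, and using $\Upsilon_{\tilde\gamma}=\Upsilon_{\gamma}\circ\Upsilon_{p_1,q_1}$ together with the fact that both sides of the identity are path-independent precisely by the holonomy invariance of $f$. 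Evaluating at $p_1'$ then gives
\begin{equation*}
c(p_1')=\Psi_{p_1,q_1}^{-1}\bigl(f_{q_1}(\Phi_{p_1,q_1}(p_1'))\bigr)=\Psi_{p_1,q_1}^{-1}(c(q_1'))\;.
\end{equation*}

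It then remains to show that $A-\Psi_{p_1,q_1}^{-1}(A)$ is divisible by $1-e^{2\pi\sqrt{-1}\alpha_B(p_1,q_1)}$, where $A=c(q_1')$. Writing $A=\sum_k m_k e^{2\pi\sqrt{-1}\beta_k}$ and applying condition \textbf{5} to obtain $\Psi_{p_1,q_1}^{-1}(\beta_k)=\beta_k+n_k\alpha_B(p_1,q_1)$ for some $n_k\in\Z$, one computes
\begin{equation*}
A-\Psi_{p_1,q_1}^{-1}(A)=\sum_k m_k e^{2\pi\sqrt{-1}\beta_k}\bigl(1-e^{2\pi\sqrt{-1}n_k\alpha_B(p_1,q_1)}\bigr)\;,
\end{equation*}
and the desired divisibility follows from the elementary fact that $1-y^n$ is divisible by $1-y$ in $R(T)$ for every $n\in\Z$. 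The main technical point to watch is that condition \textbf{5} literally constrains $\Psi_{p,q}$ only on $\mathfrak{v}_p^{*}\cap \Z_T^{*}$, whereas $f_{q_1}(q_1')$ may a priori involve weights outside this sublattice; one resolves this by using the natural extension of $\Psi_{p,q}$ to all of $\gd$ obeying the same linear formula, as is manifest in the flag manifold model of section \ref{flag GKM}.
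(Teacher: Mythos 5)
Your proof is correct and follows essentially the same route as the paper's: reduce to horizontal edges (the vertical case being immediate since each $f_q$ is a $K$-class on its fiber), use holonomy invariance to relate $c(p')$ and $c(q')$ by $\Psi_{p,q}^{\pm1}$, and then apply condition \textbf{5} termwise to exhibit the difference as a multiple of $1-e^{2\pi\sqrt{-1}\alpha_B(p,q)}$. The only differences are cosmetic --- you work with $\Psi_{p,q}^{-1}$ where the paper uses $\Psi_{p,q}$, and your closing remark about extending $\Psi_{p,q}$ beyond $\mathfrak{v}_p^*\cap\Z_T^*$ makes explicit a point the paper passes over silently.
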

\begin{proof}
Since $c$ is a class on each fiber, it is sufficient to check the compatibility condition \eqref{comp cond}
on horizontal edges; let $e=(p,q)$ be an edge of $\G_B$, and let $e'=(p',q')$ be its lift at $p'\in V$.
If $c(p')=f_p(p')=\sum_k n_k e^{2\pi\sqrt{-1}\alpha_k}$, where $\alpha_k\in \Z_T^*\cap \mathfrak{v}^*_p$ and $n_k\in \Z$ for all $k$, then
$$
c(p')-c(q')=f_p(p')-f_q(q')=f_p(p')-\Psi_e(f_p(p'))=\sum_k n_k (e^{2\pi\sqrt{-1} \alpha_k}-e^{2\pi\sqrt{-1} \Psi_e(\alpha_k)})\;.
$$
By definition of GKM fiber bundle, $\Psi_e(\alpha_k)=\alpha_k+c(\alpha_k)\alpha_B(p,q)$ for every $k$, where $c(\alpha_k)$ is an
integer and $\alpha_B(p,q)=\alpha(p',q')$. So $e^{2\pi\sqrt{-1} \alpha_k}-e^{2\pi\sqrt{-1} \Psi_e(\alpha_k)}=e^{2\pi\sqrt{-1}\alpha_k}(1-e^{2\pi\sqrt{-1}c(\alpha_k)\alpha_B(p,q)})$, and it's easy to see
that $(1-e^{2\pi\sqrt{-1}c(\alpha_k)\alpha_B(p,q)})= \beta_k(1-e^{2\pi\sqrt{-1}\alpha_B(p,q)})$ for some 
$\beta_k\in R(T)$, for every $k$.
\end{proof}

\section{Classes on projective spaces}\label{proj spaces}
Let $T=(S^1)^n$ be the compact torus of dimension $n$,
with Lie algebra $\ft= \R^n$, and
let $\{ y_1, \ldots , y_n\}$ be the basis of $\ft^* \simeq \R^n$
dual to the canonical basis
of $\R^n$. Let $\{e_1, \ldots, e_n\}$ be the canonical basis of
$\C^n$. The torus $T$ acts
componentwise on $\C^n$ by
\begin{equation}\label{action std}
   (t_1,\ldots,t_n) \cdot (z_1,\ldots, z_n) = (t_1 z_1, \ldots, t_n z_n) \; .
\end{equation}
This action induces a GKM action of $T$ on 
$\C P^{n-1}$, and the GKM graph is $\Gamma = \mathcal{K}_n$, the complete graph on $n$ vertices labelled by  $[n]=\{1, \ldots, n\}$. 
The axial function $\alpha$ on the edge $(i,j)$ is $y_i-y_j$, for every $i\neq j$.
Let
$\S= \Z[y_1, \ldots, y_n]$, $(\S)$ the field of fractions of $\S$, 
$\cM = \text{Maps}([n], \S)$, and
$$H_{\alpha}(\Gamma) = \{ f \in \cM \; | \; f(j)-f(k) \in (y_j-y_k)\S, \text{ for all } j \neq k\}\; .$$

Then $H_{\alpha}(\Gamma)$ is an $\S$-subalgebra of $\cM$. 
Let $\int_{\Gamma} \colon \cM \to (\S)$ be the map
$$\int_{\Gamma} f = \sum_{k=1}^n \frac{f(k)}{\prod_{j \neq k} (y_k-y_j)}\; .$$

\begin{prop}
Let $f \in \cM$. Then $f \in H_{\alpha}(\Gamma)$ if and only if $\int_{\Gamma}  f  \in \S$.
\end{prop}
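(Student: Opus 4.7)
The plan is to clear denominators in $\int_{\Gamma} f$ and then exploit that $\S=\Z[y_1,\ldots,y_n]$ is a UFD whose irreducibles $y_i-y_j$ are pairwise coprime. Let $D=\prod_{i<j}(y_i-y_j)$. A short computation, pulling $D$ out of each summand and tracking the sign $(-1)^{k-1}$ produced by the missing factors of $\prod_{j\neq k}(y_k-y_j)$, rewrites the localization sum as
\[
\int_{\Gamma} f = \frac{N(f)}{D}, \qquad N(f) = \sum_{k=1}^{n}(-1)^{k-1}\, f(k) \prod_{\substack{a<b\\ a,b\neq k}}(y_a-y_b).
\]
Since $D$ is squarefree with coprime irreducible factors, $\int_{\Gamma} f\in\S$ if and only if $(y_i-y_j)\mid N(f)$ for every $i<j$, equivalently $N(f)\big|_{y_i=y_j}=0$ for every $i<j$.

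The key step is to evaluate $N(f)$ on the hyperplane $y_i=y_j$ for a fixed pair $i<j$. For each $k\notin\{i,j\}$, both $i$ and $j$ lie in the index set of the product $\prod_{a<b,\,a,b\neq k}(y_a-y_b)$, so that product already contains the factor $(y_i-y_j)$ and the $k$-th summand vanishes after substitution. Only the $k=i$ and $k=j$ summands survive, giving
\[
N(f)\big|_{y_i=y_j} = (-1)^{i-1}\, f(i)\big|_{y_i=y_j}\, P_i + (-1)^{j-1}\, f(j)\big|_{y_i=y_j}\, P_j,
\]
where $P_i=\prod_{a<b,\,a,b\neq i}(y_a-y_b)\big|_{y_i=y_j}$ and $P_j=\prod_{a<b,\,a,b\neq j}(y_a-y_b)\big|_{y_i=y_j}$. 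Both $P_i$ and $P_j$ are Vandermonde-type products in the same set of variables, and a direct matching of factors shows that they differ by the explicit sign $(-1)^{j-i-1}$. Combining the three signs collapses the display to
\[
N(f)\big|_{y_i=y_j} = \pm\, P_j \cdot \bigl(f(i)-f(j)\bigr)\big|_{y_i=y_j},
\]
with $P_j$ a nonzero element of $\S$.

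Since $\S$ is a UFD, the vanishing $N(f)\big|_{y_i=y_j}=0$ is therefore equivalent to $(y_i-y_j)\mid f(i)-f(j)$. Running this equivalence over all pairs $i<j$ delivers both implications at once: $\int_{\Gamma} f\in\S$ if and only if $f(i)-f(j)\in(y_i-y_j)\S$ for all $i\neq j$, which is precisely $f\in H_{\alpha}(\Gamma)$. The main obstacle I expect is the sign bookkeeping — first the alternating signs in $N(f)$, then the sign $(-1)^{j-i-1}$ relating $P_i$ and $P_j$ — and it can be dispatched by verifying the claim first on $n=2,3$ and then by a careful indexing argument in general.
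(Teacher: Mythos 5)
Your proof is correct and follows essentially the same route as the paper's: both reduce the question to whether each irreducible factor $y_i-y_j$ of the common denominator divides the numerator, observe that only the $k=i$ and $k=j$ summands are relevant for that factor, and conclude that the obstruction is exactly $(y_i-y_j)\mid f(i)-f(j)$. The only difference is bookkeeping — you clear denominators and evaluate the numerator on the hyperplane $y_i=y_j$, while the paper combines the two relevant fractions directly — and your sign computation $P_i=(-1)^{j-i-1}P_j\big|_{y_i=y_j}$ checks out.
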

\begin{proof}We have
$$\int_{\Gamma} f = \sum_{k=1}^n \frac{f(k)}{\prod_{j \neq k} (y_k-y_j)} = \frac{P}{\prod_{j<k} (y_j-y_k)}\; ,$$
where $P \in \S$. The factors in the denominator are distinct and relatively prime, hence $\int_\Gamma f \in \S$ if and only if all factors in the denominator divide $P$.

The factor $y_j-y_k$ comes from
\begin{align*}
\frac{f(k)}{\prod_{i \neq k} (y_i-y_j)} +& \frac{f(j)}{\prod_{i\neq j} (y_i-y_j)} = \frac{f(j)-f(k)}{(y_j-y_k) \prod_{i\neq j,k} (y_j-y_i)} + \\ & + \frac{f(k) \left( \prod_{i\neq k,k} (y_k-y_i) - \prod_{i \neq j,k} (y_j-y_i) \right)}{(y_j-y_k) \prod_{i \neq j,k} (y_j-y_i)(y_k-y_i)}\; .
\end{align*}

But $y_j-y_k$ divides the numerator of the second fraction, hence $y_j-y_k$ divides $P$ if and only if it divides $f(j)-f(k)$.
\end{proof}

The permutation group $S_n$ acts on $\S$ by permuting variables, 
and that action induces an action on $H_{\alpha}(\Gamma)$ by
$$(w \cdot f)(j) = w^{-1}\cdot f(w(j))\; .$$
We say that a class $f\in H_{\alpha}(\Gamma)$ is \emph{$S_n$-invariant} if
$w\cdot f = f$ for every $w\in S_n$, i.e.
$$
f(w(j))=w\cdot f(j)\quad\mbox{for every}\;\;w\in S_n\;.
$$
The goal of this section is to construct bases of the $\S$-module 
$H_\alpha(\Gamma)$ consisting of $S_n$-invariant classes, and to give explicit 
formulas for the coordinates of a given class in those bases.

Let $\phi \colon [n] \to \S$, $\phi(j) = y_j$ for all $1 \leqslant j \leqslant n$. Then 
$\phi$ is an $S_n$-invariant class in $H_{\alpha}(\Gamma)$. 
For $1 \leqslant k \leqslant n$, let $f_k = \phi^{k-1}$. Then $f_1, f_2, \ldots, f_{n}$ 
are $\S$-linearly independent invariant classes.

For $0 \leqslant j \leqslant n$, let $s_j$ be the $j^{th}$ elementary symmetric polynomial 
in the variables $y_1,\ldots, y_n$. Then $s_0 = 1$, $s_1 = y_1 + \dotsb + y_n$, 
$s_2 = y_1y_2 + y_1y_3 + \dotsb + y_{n-1}y_n$, \ldots, $s_n =  y_1 y_2 \dotsb y_n$. 
For $1 \leqslant k \leqslant n$, let 
$$g_k = f_k   - s_1 f_{k-1} + s_2 f_{k-2} - \dotsb +  (-1)^{k-1} s_{k-1} f_1\; .$$
Then $g_1, g_2, \ldots, g_{n}$ are invariant classes and the transition matrix from 
the $f$'s to the $g$'s is triangular with ones on the diagonal, hence it is invertible 
over $\S$. Therefore the classes $g_1, \ldots, g_n$ are also $\S$-linearly independent.

Let $\langle .,.\rangle \colon H_{\alpha}(\Gamma) \times H_{\alpha}(\Gamma) \to \S$
be the pairing
$$\langle f, g\rangle = \int_{\Gamma} fg\; .$$

\begin{thm}
The sets of classes $\{f_1, \ldots, f_n\}$ and $\{g_n, \ldots, g_1\}$ are dual to each other: 
$$\langle f_j , g_{n-k+1} \rangle = \delta_{jk}\; .$$
for all $1 \leqslant j,k \leqslant n$.
\end{thm}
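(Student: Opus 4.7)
The plan is to reduce the statement to a single transparent polynomial identity and then read off the pairing by comparing coefficients in the Lagrange interpolation formula.

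\textbf{Step 1.} I would first establish the polynomial identity
\[
\prod_{i \neq l}(x - y_i) \;=\; \sum_{m=1}^{n} g_m(l)\, x^{n-m} \qquad \text{in } \S[x].
\]
The proof is polynomial long division of $\prod_{i=1}^n(x-y_i) = \sum_{a=0}^n (-1)^a s_a\, x^{n-a}$ by $(x - y_l)$. The coefficients $b_0, b_1, \ldots, b_{n-1}$ of the quotient satisfy $b_0 = 1$ and $b_k = y_l\, b_{k-1} + (-1)^k s_k$ for $1 \leqslant k \leqslant n-1$. A direct induction, or inspection of the definition $g_m = f_m - s_1 f_{m-1} + \cdots + (-1)^{m-1} s_{m-1} f_1$ evaluated at $l$, shows that $g_{k+1}(l)$ satisfies the same recursion with the same initial value, so $b_k = g_{k+1}(l)$.

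\textbf{Step 2.} Dividing through by $Q_l(y_l) := \prod_{i \neq l}(y_l - y_i)$ identifies the left-hand side with the $l$-th Lagrange basis polynomial $L_l(x)$ attached to the distinct points $y_1, \ldots, y_n$, so
\[
L_l(x) \;=\; \sum_{m=1}^{n} \frac{g_m(l)}{Q_l(y_l)}\, x^{n-m}, \qquad L_l(y_p) = \delta_{lp}.
\]

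\textbf{Step 3.} For each $1 \leqslant j \leqslant n$, the polynomial $x^{j-1}$ has degree at most $n-1$, so Lagrange interpolation at $y_1, \ldots, y_n$ gives
\[
x^{j-1} \;=\; \sum_{l=1}^{n} y_l^{j-1}\, L_l(x) \;=\; \sum_{m=1}^{n} \biggl( \sum_{l=1}^n \frac{y_l^{j-1}\, g_m(l)}{Q_l(y_l)} \biggr) x^{n-m}.
\]
By the definitions of $f_j$, the pairing, and $\int_\Gamma$, the bracketed coefficient is exactly $\langle f_j, g_m\rangle$. Comparing coefficients of $x^{n-m}$ on both sides yields $\langle f_j, g_m\rangle = \delta_{n-m,\,j-1} = \delta_{m,\,n-j+1}$. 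Specializing $m = n-k+1$ gives $\langle f_j, g_{n-k+1}\rangle = \delta_{n-k+1,\,n-j+1} = \delta_{jk}$, which is the claim.

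\textbf{Main obstacle.} The essential work is Step 1: recognizing that the coefficients of the quotient $\prod_i(x-y_i)/(x-y_l)$, as a polynomial in $x$, are precisely the values $g_m(l)$ of the classes defined in the text. Once this polynomial identity is in place, the rest is formal: reading off the coefficient of $x^{n-m}$ in the Lagrange expansion of $x^{j-1}$ delivers the duality statement with no further symmetric-function computation.
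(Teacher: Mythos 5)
Your proof is correct, and it takes a genuinely different route from the paper's. The paper's argument rests on two asserted facts --- the moment identities $\int_{\Gamma}\phi^{k}=0$ for $0\leqslant k\leqslant n-2$ and $\int_{\Gamma}\phi^{n-1}=1$, together with the relation $\phi^{n}-s_{1}\phi^{n-1}+\dotsb+(-1)^{n}s_{n}\phi^{0}=0$ (each $y_{l}$ is a root of $\prod_{i}(x-y_{i})$) --- and then runs a three-way case analysis on $j<k$, $j=k$, $j>k$, showing in each case that $f_{j}g_{n-k+1}$ reduces modulo that relation to a combination of powers of $\phi$ of exponent at most $n-2$, plus $\phi^{n-1}$ exactly when $j=k$. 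You instead identify $g_{m}(l)$ as the coefficient of $x^{n-m}$ in $\prod_{i\neq l}(x-y_{i})$ (your synthetic-division recursion checks out: $b_{0}=1=g_{1}(l)$ and $y_{l}g_{k}(l)+(-1)^{k}s_{k}=g_{k+1}(l)$), so that $Q_{l}(y_{l})L_{l}(x)=\sum_{m}g_{m}(l)x^{n-m}$, and then read the entire duality matrix off the Lagrange expansion of $x^{j-1}$ in one stroke, since the coefficient of $x^{n-m}$ there is exactly $\sum_{l}f_{j}(l)g_{m}(l)/\prod_{i\neq l}(y_{l}-y_{i})=\langle f_{j},g_{m}\rangle$. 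What your approach buys is the elimination of both the case analysis and the unproved moment identities --- indeed $\int_{\Gamma}\phi^{j-1}=\langle f_{j},g_{1}\rangle=\delta_{jn}$ falls out as the special case $m=1$ --- at the cost of the one nontrivial observation in your Step 1; what the paper's approach buys is a more direct view of the design principle behind the $g_{k}$, namely that they are truncations of the characteristic polynomial chosen so that products $f_{j}g_{n-k+1}$ collapse into the kernel of $\int_{\Gamma}$ off the diagonal.
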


\begin{proof}
We have $\int_{\Gamma} \phi^k = 0$ for all $0 \leqslant k \leqslant n\!-\!2$ and $\int_{\Gamma} \phi^{n-1}=1$. Moreover
$$\phi^n -s_1 \phi^{n-1} + s_2 \phi^{n-2} - \ldots + (-1)^ns_n \phi^0 = 0\; .$$

Let $1 \leqslant j,k \leqslant n$. Then
$$f_j g_{n-k+1} = \phi^{j-1} \left( \phi^{n-k} - s_1 \phi^{n-k-1} + \dotsb + (-1)^{n-k} s_{n-k} \phi^0 \right)\; .$$

If $j < k$, then 
$$f_j g_{n-k+1} = \text{ a combination of powers of } \phi \text{ at most } n-2\; ,$$
and then $\langle f_j , g_{n-k+1} \rangle  = 0$.

If $j=k$, then 
$$f_k g_{n-k+1} = \phi^{n-1} + \text{ a combination of powers of } \phi \text{ at most } n-2\; ,$$
hence $\langle f_k , g_{n-k+1} \rangle  = \int_{\Gamma} \phi^{n-1} = 1$.

If $j > k$, then
\begin{align*}
f_j g_{n-k+1} & = \phi^{j-k-1} \left( \phi^{n} - s_1 \phi^{n-1} + \dotsb + (-1)^{n-k} s_{n-k} \phi^k \right) = \\
& = - \,\phi^{j-k-1}  \left( (-1)^{n-k-1} s_{n-k+1} \phi^{k-1} + \ldots + (-1)^n s_n \phi^0 \right) = \\
& = \text{ a combination of powers of } \phi \text{ at most } n-2 \; ,
\end{align*}
and then $\langle f_j , g_{n-k+1} \rangle  = 0$.
\end{proof}

The following is an immediate consequence of this result.

\begin{corollary}
The sets $\{f_1, f_2, \ldots, f_{n} \}$ and $\{g_1, g_2, \ldots, g_{n} \}$ are dual bases 
of the $\S$-module $H_{\alpha}(\Gamma)$, and both bases consist of invariant classes. 
Moreover, if $h \in H_{\alpha}(\Gamma)$ and
$$h = a_1 f_1 + a_2 f_2 + \dotsb + a_n f_n = b_1 g_1 + b_2 g_2 \dotsb + b_n g_n\; ,$$
then
$$a_k = \langle g_{n-k+1}, h \rangle \quad \text{ and } \quad b_k = \langle f_{n-k+1}, h\rangle\; .$$
\end{corollary}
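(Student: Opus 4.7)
The corollary is largely a repackaging of the preceding theorem: invariance and $\S$-linear independence of each family $\{f_k\}$ and $\{g_k\}$ have already been verified in the paragraphs just before the theorem, so the remaining content is (a) that each family in fact spans $H_\alpha(\Gamma)$ as an $\S$-module, and (b) the coordinate formulas. My plan is to deduce both from the duality relation $\langle f_j, g_{n-k+1}\rangle=\delta_{jk}$ together with the proposition characterizing membership in $H_\alpha(\Gamma)$ by integrality of $\int_\Gamma$.

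First I would check that the $\S$-module $H_\alpha(\Gamma)$ has rank $n$. Since $H_\alpha(\Gamma)\subset\cM=\S^n$ and the defining divisibility conditions become vacuous after inverting the nonzero elements $y_j-y_k$, one has $H_\alpha(\Gamma)\otimes_\S (\S)=(\S)^n$. Consequently the $n$ $\S$-linearly independent classes $f_1,\ldots,f_n$ form an $(\S)$-basis of this $n$-dimensional space, and the same is true for $g_1,\ldots,g_n$. Hence for any $h\in H_\alpha(\Gamma)$ there exist unique $c_1,\ldots,c_n\in(\S)$ with $h=\sum_k c_k f_k$.

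Now I would use duality to upgrade the $c_k$'s from $(\S)$ to $\S$. Pairing the identity $h=\sum_j c_j f_j$ against $g_{n-k+1}$ and invoking the theorem gives
$$\langle g_{n-k+1},h\rangle=\sum_j c_j\,\langle f_j,g_{n-k+1}\rangle=c_k\; .$$
The key remaining point is that $\langle g_{n-k+1},h\rangle\in\S$, not merely in $(\S)$: since $H_\alpha(\Gamma)$ is a subalgebra of $\cM$, the product $g_{n-k+1}\cdot h$ lies in $H_\alpha(\Gamma)$, and then by the proposition preceding the theorem $\int_\Gamma g_{n-k+1}\,h\in\S$. Therefore $c_k\in\S$, which simultaneously shows that $\{f_1,\ldots,f_n\}$ spans over $\S$ (so is a basis) and that the coordinates are $a_k=\langle g_{n-k+1},h\rangle$. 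Applying exactly the same argument with the roles of the two families reversed yields the basis property for $\{g_1,\ldots,g_n\}$ together with $b_k=\langle f_{n-k+1},h\rangle$.

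There is no hard step; the only subtlety is not losing track of the ring in which the coefficients live, and the whole argument hinges on the integrality statement supplied by the $\int_\Gamma$-proposition. I would therefore write the proof as one short paragraph that first passes to $(\S)$, then pairs with the dual family, and finally cites the $\int_\Gamma$-proposition to conclude integrality of the coefficients.
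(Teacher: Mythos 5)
Your argument is correct and is exactly the standard way to make precise what the paper dismisses as ``an immediate consequence'' of the duality theorem: the paper supplies no proof, and your chain (rank-$n$ over the fraction field, pairing against the dual family, then the $\int_\Gamma$-integrality proposition to force the coefficients into $\S$) is the intended one. The only point worth stating explicitly when you write it up is that the pairing is symmetric, so $\langle f_j,g_{n-k+1}\rangle=\delta_{jk}$ can be used on either side.
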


\medskip

The entire discussion above extends naturally to $K$-theory. Let $z_j = e^{2\pi\sqrt{-1}y_j}$ for
$j=1,\ldots, n$. 
If $y_1,\ldots,y_n$ denote a basis of $\Z_{T}^*$, we have that
$$R(T) = \Z[z_1, \ldots, z_n,z_1^{-1},\ldots, z_n^{-1}].$$

Let $\psi \colon \S \to R(T)$ be the injective ring morphism determined by $\psi(y_j) = z_j$, for $j=1,\ldots, n$. Its image is  $\psi(\S) = R_+(T)=\Z[z_1,\ldots, z_n]$. Let 
$$K_\alpha(\Gamma) = \{ g \colon [n] \to R(T) \; | \; f(j)-f(k) \in (z_j-z_k)R(T), \text{ for all } j \neq k\}\; .$$

Then 
$$\Phi \colon H_{\alpha}(\Gamma) \to K_{\alpha}(\Gamma) \quad , \quad \Phi(f)(j) = \psi(f(j))\; $$
is an injective morphism of rings and 
$$\Phi(qf) = \psi(q) \Phi(f)$$
for all $f \in H_{\alpha}(\Gamma)$ and $q \in \S$. The image of $\Phi$ is
$$\text{im} \Phi = \{ g \in K_{\alpha}(\Gamma) \; | \; \text{im}(g) \subset R_+(T)\}\; .$$
Let $\nu = \Phi(\phi)$; then $\nu \colon [n]\to R(T)$, $\nu(j) = z_j$.

The symmetric group $S_n$ acts on $R(T)$ by simultaneously permuting the variables $z$ and $z^{-1}$, 
and that action induces an action on $K_{\alpha}(\Gamma)$. The $K$-class $\nu$ is invariant, and so are its powers.

\begin{prop}\label{inv classes basis}
The invariant classes $\{1, \nu, \ldots, \nu^{n-1}\}$ form a basis of $K_{\alpha}(\Gamma)$ over $R(T)$.
\end{prop}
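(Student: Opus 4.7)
The plan is to mirror the cohomological argument of the previous section almost verbatim, replacing the variables $y_i$ with $z_i = e^{2\pi\sqrt{-1}y_i}$ and $\mathbb{S}$ with $R(T)$. A preliminary observation that makes the translation painless: since $z_k$ is a unit in $R(T)$ and $z_j - z_k = -z_k(1 - z_j z_k^{-1})$, the compatibility condition \eqref{comp cond} defining $K_\alpha(\Gamma)$ is equivalent to $(z_j - z_k) \mid (g(j) - g(k))$ in $R(T)$ for all $j \neq k$. Thus the axial labels $y_j - y_k$ in cohomology correspond to $z_j - z_k$ in $K$-theory, and a Vandermonde in the $z_j$'s plays the role of the Vandermonde in the $y_j$'s.

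For linear independence, I would simply evaluate a putative relation $\sum_{k=0}^{n-1} a_k \nu^k = 0$ at each vertex $j$, obtaining the Vandermonde system $\sum_k a_k z_j^k = 0$ whose determinant $\prod_{i<j}(z_j - z_i)$ is a regular element of $R(T)$, forcing $a_k = 0$.

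For spanning, I would introduce the $K$-theoretic integral
\[
\textstyle\int_\Gamma^K f \;=\; \sum_{k=1}^n \dfrac{f(k)}{\prod_{j \neq k}(z_k - z_j)},
\]
and prove the $K$-theoretic analog of the proposition stating that $f \in K_\alpha(\Gamma)$ iff $\int_\Gamma^K f \in R(T)$ by the same partial-fraction pairing trick. Then, setting $\sigma_j = e_j(z_1,\ldots,z_n) \in R(T)$ and
\[
g_k \;=\; \nu^{k-1} - \sigma_1\nu^{k-2} + \cdots + (-1)^{k-1}\sigma_{k-1}, \qquad k=1,\ldots,n,
\]
one has the two identities $\int_\Gamma^K \nu^r = 0$ for $0 \le r \le n-2$ and $\int_\Gamma^K \nu^{n-1}=1$, together with the Cayley--Hamilton type relation $\prod_{i=1}^n(\nu - z_i) = 0$ in $K_\alpha(\Gamma)$ (true because the $i$-th coordinate of the product vanishes at vertex $i$). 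These yield $\langle \nu^{j-1}, g_{n-k+1}\rangle = \delta_{jk}$ exactly as in the cohomological theorem. Given any $h \in K_\alpha(\Gamma)$, the coefficients $a_k := \langle g_{n-k+1}, h\rangle \in R(T)$ then produce an identity $h = \sum_{k=0}^{n-1} a_k \nu^k$, since the difference pairs trivially with the basis $\{g_1,\ldots,g_n\}$ of the dual side; equivalently, one can argue directly that the Vandermonde system for the $a_k$'s has a solution in $R(T)$ because each factor $z_i - z_j$ in the denominator of the Cramer expression divides the corresponding numerator (the pairwise coprime factors $z_i - z_j$ in $R(T)$ each divide $P$).

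The main obstacle is verifying that $\int_\Gamma^K$ takes $K$-classes into $R(T)$ (equivalently, that the Cramer numerators are divisible by all the $z_i - z_j$). This requires reproducing the partial-fraction argument of the cohomological proposition in $R(T)$: after pairing the $j$-th and $k$-th summands one obtains a fraction whose numerator is manifestly a multiple of $f(j) - f(k)$, hence of $z_j - z_k$ by the reformulated compatibility condition above, together with an auxiliary term whose numerator is visibly divisible by $z_j - z_k$. Pairwise coprimality of the $z_j - z_k$ in the Laurent polynomial ring $R(T) = \mathbb{Z}[z_1^{\pm1},\ldots,z_n^{\pm1}]$ then forces the full denominator to divide the numerator. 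Once this integrality statement is in place, the duality computation and hence the theorem follow by direct translation.
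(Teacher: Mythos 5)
Your proposal is correct, but it takes a genuinely different route from the paper on the spanning half of the statement. For linear independence both arguments are the same Vandermonde computation. For spanning, however, the paper does not redo any work in $R(T)$: it observes that any $g\in K_\alpha(\Gamma)$ becomes, after multiplication by an invertible monomial $u$, a class with values in $R_+(T)=\Z[z_1,\ldots,z_n]=\psi(\S)$, hence of the form $\Phi(f)$ for some $f\in H_\alpha(\Gamma)$; it then expands $f$ in the already-established basis $1,\phi,\ldots,\phi^{n-1}$ of $H_\alpha(\Gamma)$ over $\S$ and transports the expansion back through $\Phi$ and $u^{-1}$. You instead rebuild the entire integration/duality apparatus directly in $K$-theory: the integral $\int_\Gamma^K$, its integrality characterization of $K$-classes, the dual classes $g_k$ built from the elementary symmetric functions of the $z_i$, and the relation $\prod_i(\nu-z_i)=0$. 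This is more self-contained (it does not lean on the cohomological corollary) and yields explicit coefficient formulas $a_k=\langle g_{n-k+1},h\rangle$ as a bonus, at the cost of being considerably longer than the paper's two-line reduction. One small point to tighten in your duality route: concluding that $h-\sum_k a_k\nu^{k-1}=0$ from the vanishing of its pairings against $g_1,\ldots,g_n$ requires knowing the pairing is non-degenerate on all of $K_\alpha(\Gamma)$ (equivalently, that $h'\mapsto(\langle\nu^j,h'\rangle)_{0\le j\le n-1}$ is injective, which is again a Vandermonde argument over the fraction field); your alternative Cramer's-rule argument, using that the elements $z_i-z_j$ are pairwise non-associate primes in the UFD $\Z[z_1^{\pm1},\ldots,z_n^{\pm1}]$ and that each divides the corresponding numerator by the compatibility condition, closes this cleanly and is the version I would keep.
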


\begin{proof}
A Vandermonde determinant argument shows that the classes are independent. 
If $g \in K_{\alpha}(\Gamma)$, then there exists an invertible element $u \in R(T)$ such that $ug \in \text{im} \Phi$.
Let $ug = \Phi(f)$, with $f \in H_{\alpha}(\Gamma)$. If
$$f = a_0 \phi^0 +  \dotsb + a_{n-1}\phi^{n-1}\; ,$$
then
$$g = u^{-1} \psi(a_0) \nu^0 + \dotsb + u^{-1}\psi(a_{n-1}) \nu^{n-1}\, $$
and therefore the classes also generate $K_{\alpha}(\Gamma)$.
\end{proof}

%%%

\section{Invariant bases on flag manifolds}\label{An Cn}
In this section we use the same technique as above
to produce a basis of invariant $K$-classes on the variety of complete flags in $\C^{n+1}$.

As in the previous section, let $T=(S^1)^{n+1}$ act componentwise on $\C^{n+1}$ by 
$$
(t_1,\ldots,t_{n+1})\cdot (z_1,\ldots,z_{n+1})=(t_1z_1,\ldots,t_{n+1}z_{n+1}),
$$
and let $x_1,\ldots,x_{n+1}\in \Z_T^*$ be the weights.
This action induces a GKM action both on the flag manifold $Fl(\C^{n+1})$
and $\C P^n$.

The GKM graph $(\Gamma,\alpha)$ associated to $Fl(\C^{n+1})$ is the permutahedron:
its vertices are in bijection with the elements of $S_{n+1}$, the group of permutations
on $n+1$ elements, and there exists
an edge $e$ between two vertices $\sigma$ and $\sigma'$ if and only if
$\sigma$ and $\sigma'$ differ by a transposition, i.e. $\sigma'=\sigma(i,j)$,
for some $1\leqslant i< j\leqslant n+1$; the axial function is given by
$\alpha(\sigma,\sigma')=x_{\sigma(i)}-x_{\sigma'(i)}$.

The group $S_{n+1}$ acts on the GKM graph by left multiplication on its vertices,
and on $\gd$ by
$$
\sigma\cdot x_i=x_{\sigma(i)}.
$$
Using the identification \eqref{rep ring}, this action determines an action on $R(T)$ by defining
\begin{equation}\label{action on R(T)}
\sigma\cdot(e^{2\pi\sqrt{-1}x_i})=e^{ 2\pi\sqrt{-1}x_{\sigma(i)}}
\end{equation}
and then extending it to the elements of $R(T)$ in the natural way.
Using the fiber bundle construction introduced in this paper
we will now produce a 
basis of the $K$-ring of $(\Gamma,\alpha)$ composed of $S_{n+1}$-\emph{invariant classes},
i.e. elements $f\in K_{\alpha}(\Gamma)$ satisfying
$$
f(u)=u\cdot f(id)\quad\mbox{for every}\quad u\in S_{n+1}.
$$

The main idea in the construction of our invariant basis of $K$-classes for
$Fl(\C^{n+1})$ is to use the natural projection $\pi$ of $Fl(\C^{n+1})$ to $\C P^n$
with fiber $Fl(\C^n)$ to construct these classes by an induction argument on $n$.
%The main idea is to use 
%the $T$ equivariant projection $\pi\colon Fl(\C^{n+1})\to \C P^n$ that sends a flag to
%its one-dimensional component. The fibers are copies of $Fl(\C^n)$, and the fibers
%over the $T$ fixed points of $\C P^n$ inherit a $T$ action. We will use this as a building block
%for the induction. 

As we saw in section \ref{proj spaces}, the GKM graph $(\Gamma_B,\alpha_B)$ associated to $\C P^n$ is $(\mathcal{K}_{n+1},\alpha_B)$, where 
$\alpha_B(i,j)=x_i-x_j$ for every $1\leq i\neq j\leq n+1$.
The projection $\pi\colon Fl(\C^{n+1})\to \C P^n$ can be described 
in a simple way in terms of the vertices of the GKM graphs; in fact
$\pi\colon (\Gamma,\alpha)\to (\Gamma_B,\alpha_B)$ is simply given
by $\pi(\sigma)=\sigma(1)$, for every $\sigma\in S_{n+1}$.

Let $z_i$ be $e^{2\pi\sqrt{-1}x_i}$ for every $i=1,\ldots,n+1$ and $\nu\colon [n+1]\to R(T)$ be invariant the $K$-class given by
$\nu(i)=z_i$ for $i=1,\ldots,n+1$.
By Proposition \ref{inv classes basis}, the classes $1,\nu,\ldots,\nu^{n}$ form an invariant basis of $K_{\alpha_B}(\mathcal{K}_{n+1})$, and they lift to $S_{n+1}$-invariant basic classes on $\G$.

Let $\G_{n+1}$ be the fiber over $\C\cdot e_{n+1}$, where $\{e_1,\ldots,e_{n+1}\}$ denotes
the canonical basis of $\C^{n+1}$; the holonomy group on this
fiber is isomorphic to $S_n$ viewed as the subgroup of $S_{n+1}$ which leaves the element
$n+1$ fixed. Once we construct a basis of $K_{\alpha}(\G_{n+1})$ consisting of
holonomy invariant classes, we can extend those to a set of classes of $K_{\alpha}(\G)$,
which, together with the invariant basic classes constructed above, will generate a basis of
$K_{\alpha}(\G)$ as a module over $R(T)$. 
We will show that if we start with a natural choice for the base of the induction, then 
the global classes generated by this means are indeed $S_{n+1}$ invariant.

%Let $G=SL(n+1,\C)$. 
%In type $A_n$, the set of simple roots is given by 
%$\Delta_0=\{\alpha_1,\ldots,\alpha_n\}$, where 
%$\alpha_i=x_i-x_{i+1}$ for $i=1,\ldots,n$, and the corresponding set
%of positive roots is $\Delta^+=\{x_i-x_j\mid 1\leqslant i<j\leqslant n+1\}$.
%Let $\Sigma=\{\alpha_2,\ldots,\alpha_n\}$; then $G/B$ is isomorphic to $Fl(\C^{n+1})$, the variety
%of complete flags in $\C^{n+1}$, and $G/P(\Sigma)$ is isomorphic to $\C P^n$ (see also \cite{GSZ2}). 
%We recall that the Weyl group of $G$ is $S_{n+1}$, the group of permutation on
%$n+1$-elements.

Letting $I=[i_1,\dots,i_n]$ be a multi-index of non-negative integers, define
$$
\mathbf{z}^I=z_1^{i_1}z_2^{i_2}\cdots z_n^{i_n}
$$
and let $C_I=C_T(\mathbf{z}^I)\colon S_{n+1}\to R(T)$ be the element defined by
$$
C_I(\sigma)=\sigma\cdot \mathbf{z}^I\quad\mbox{for every}\quad\sigma\in S_{n+1}\;;
$$
it's easy to verify that $C_I$ is an invariant class of $K_{\alpha}(\Gamma)$.
\begin{theorem}
Let $$\mathcal{A}_n=\{I=[i_1,\ldots,i_n]\mid 0\leqslant i_1\leqslant n,0\leqslant i_2\leqslant n-1,\ldots,0\leqslant i_n\leqslant 1\}.$$
Then the set $$
\{C_I=C_T(\mathbf{z}^I)\mid I\in \mathcal{A}_n\}
$$
is an invariant basis of $K_{\alpha}(\Gamma)$ as an $R(T)$-module.
\end{theorem}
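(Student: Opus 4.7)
The plan is to induct on $n$, using the GKM fiber bundle $\pi\colon (\Gamma,\alpha)\to (\mathcal{K}_{n+1},\alpha_B)$ associated to $Fl(\C^{n+1})\to \C P^n$ (whose typical fiber is $Fl(\C^n)$), and to identify each $C_I$ as the product of a pulled-back base class $\pi^{*}(\nu^{i_1})$ with the canonical holonomy-invariant extension of a fiber basis class. The base case $n=1$ is immediate: $Fl(\C^2)=\C P^1$, $\mathcal{A}_1=\{[0],[1]\}$, and $\{C_{[0]},C_{[1]}\}=\{1,\nu\}$ is precisely the invariant basis furnished by Proposition \ref{inv classes basis}. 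For the inductive step I would assume the theorem for $n-1$, so that the fiber $\Gamma_{n+1}$ (the GKM graph of $Fl(\C^n)$) has the invariant $R(T)$-basis $\{\widehat{C}_J=C_T(z_1^{j_1}\cdots z_{n-1}^{j_{n-1}})\mid J\in\mathcal{A}_{n-1}\}$.

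I would first verify directly that each $C_I$ lies in $K_{\alpha}(\Gamma)$ and is $S_{n+1}$-invariant. The invariance is built into the definition, $C_I(u\sigma)=u\cdot C_I(\sigma)$; for the compatibility condition on an edge $(\sigma,\sigma')$ arising from a transposition at positions $(i,j)$, at most two factors of the monomial $\prod_k z_{\sigma(k)}^{i_k}$ change, and each difference $z_a^m-z_b^m$ is a multiple of $z_a-z_b$, which itself is a unit multiple of $1-e^{2\pi\sqrt{-1}\alpha(\sigma,\sigma')}$ in $R(T)$. I would then focus on the subfamily $\{C_{[0,J]}\}_{J\in\mathcal{A}_{n-1}}$: a direct computation using $\sigma(1)=n+1$ together with the reindexing $\tau(k)=\sigma(k+1)$ shows $C_{[0,J]}|_{\Gamma_{n+1}}=\widehat{C}_J$, which by the induction hypothesis is a basis of $K_{\alpha}(\Gamma_{n+1})$. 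Since the holonomy of $\pi$ is exactly $S_n$ (as recalled at the end of Section \ref{flag GKM}) and transports $\Gamma_{n+1}$ isomorphically to every other fiber, the restriction of each $C_{[0,J]}$ to an arbitrary fiber $\Gamma_k$ is again a basis; equivalently, one can identify $C_{[0,J]}$ with the extension produced by the proposition in Section \ref{inv classes} applied to the holonomy-invariant class $\widehat{C}_J$, uniqueness being forced by the $S_{n+1}$-invariance of both sides. Theorem \ref{leray-serre} then yields that $\{C_{[0,J]}\mid J\in\mathcal{A}_{n-1}\}$ is a free basis of $K_{\alpha}(\Gamma)$ over $(K_{\alpha}(\Gamma))_{bas}\simeq K_{\alpha_B}(\mathcal{K}_{n+1})$.

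Combining this with the $R(T)$-basis $\{\nu^{i_1}\mid 0\leqslant i_1\leqslant n\}$ of $K_{\alpha_B}(\mathcal{K}_{n+1})$ from Proposition \ref{inv classes basis}, and using the direct identity
$$
\pi^{*}(\nu^{i_1})(\sigma)\cdot C_{[0,J]}(\sigma)=z_{\sigma(1)}^{i_1}z_{\sigma(2)}^{j_1}\cdots z_{\sigma(n)}^{j_{n-1}}=C_I(\sigma)
$$
for $I=[i_1,j_1,\ldots,j_{n-1}]$, produces exactly the claimed family $\{C_I\mid I\in\mathcal{A}_n\}$ as an $R(T)$-basis; the cardinality check $|\mathcal{A}_n|=(n+1)\cdot n!=(n+1)!=|V|$ confirms that the induction closes. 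The main obstacle I expect is the compatibility step in the middle paragraph, namely making sure that the naive explicit formula $C_{[0,J]}$ really gives, on every fiber and not just on $\Gamma_{n+1}$, a restriction which is a basis for that fiber's $K$-ring. This rests on the identification of the holonomy of $\pi$ with $S_n\leqslant S_{n+1}$ from Section \ref{flag GKM} together with the $S_{n+1}$-equivariance of the transport between fibers; once those two pieces are fitted together, both hypotheses of Theorem \ref{leray-serre} are in place and the remainder of the argument is bookkeeping.
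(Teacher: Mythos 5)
Your proposal is correct and follows essentially the same route as the paper: induction along the GKM fiber bundle $Fl(\C^{n+1})\to \C P^n$ with fiber $Fl(\C^n)$, extending an invariant fiber basis via holonomy and applying Theorem \ref{leray-serre} together with the projective-space basis $\{1,\nu,\ldots,\nu^{n}\}$ of Proposition \ref{inv classes basis}. The only difference is one of detail: the paper treats $n=2$ explicitly and says the general case repeats the argument, whereas you make explicit the restriction identity $C_{[0,J]}|_{\Gamma_{n+1}}=\widehat{C}_J$, the transport to the other fibers, and the product identity $\pi^{*}(\nu^{i_1})\cdot C_{[0,J]}=C_{[i_1,J]}$ that the paper leaves implicit.
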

\begin{proof}
As mentioned above, 
the proof is by induction. Let $n=2$, then the fiber bundle $\pi\colon Fl(\C^3)\to \C P^2$
is a $\C P^1$-bundle. 
Let $p=\C\cdot e_3\in \C P^2$ be the one dimensional subspace generated
by the third vector in the canonical basis of $\C^3$. Then the fiber over $p$
is a copy of $\C P^1$ and
the invariant classes $C_{[0]}$ and $C_{[1]}$ form a basis of the $K$-ring of the fiber.
We can extend these classes using transition maps between fibers, thus obtaining
$S_3$-invariant $K$-classes $C_{[0,0]}$ and $C_{[0,1]}$ on $Fl(\C^3)$.
By Proposition \ref{inv classes basis}, the invariant classes $1,\nu,\nu^2$ form 
a basis of the $K$-ring of the base, and they lift to $S_3$-invariant basic classes
$C_{[1,0]}$ and $C_{[2,0]}$.
By Theorem \ref{leray-serre}, the $K$-ring of $Fl(\C^3)$ is freely generated over $R(T)$ by
the invariant classes
$C_{[0,0]}$, $C_{[0,1]}$, $C_{[1,0]}$, $C_{[1,1]}$, $C_{[2,0]}$ and $C_{[2,1]}$.

The general statement follows by repeating inductively the same argument, since at each stage the fiber
of $\pi\colon Fl(\C^{m+1})\to \C P^m$ is a copy of $Fl(\C^m)$.
\end{proof}
\begin{remark}
This argument can be adapted to give a basis of invariant $K$-classes for the
generalized flag variety of type $C_n$.
Let $\alpha_i=x_i-x_{i+1}$ for $i=1,\ldots,n-1$ and $\alpha_n=2x_n$ be a choice
 of simple roots of type $C_n$. The corresponding Weyl group $W$
 is the group of signed permutations of $n$ elements.
Let $\Sigma=\{\alpha_2,\ldots,\alpha_n\}$, then $G/P(\Sigma)$ is a GKM manifold
diffeomorphic to a complex projective
space $\C P^{2n-1}$; its GKM graph is a complete graph
$\mathcal{K}_{2n}$ whose vertices can be identified with the set
$\{\pm 1,\ldots,\pm n\}$ and the axial function $\alpha$ is simply given
by $\alpha(\pm i,\pm j)=\pm x_i\mp x_j$, for every edge $(\pm i,\pm j)$
of the GKM graph.
Observe that the procedure in section \ref{proj spaces} can be used here
to produce a $W$-invariant basis of $K_{\alpha}(\mathcal{K}_{2n})$. In fact
it is sufficient to let $y_1=x_1,\ldots,y_n=x_n,y_{n+1}=-x_1$ and $y_{2n}=-x_n$;
the basis of Proposition \ref{inv classes basis} is $S_{2n}$-invariant, and hence
in particular $W$-invariant.

Let $G/B$ be the generic coadjoint orbit of type $C_n$, and $(\Gamma,\alpha)$
its GKM graph.
If we consider the natural projection $G/B\to G/P(\Sigma)$, the fiber is
diffeomorphic to a generic coadjoint orbit of type $C_{n-1}$. Hence we can repeat the
inductive argument used in type $A_n$ to produce a $W$-invariant
basis of $K_{\alpha}(\Gamma)$.

\end{remark}
%%%
\section{The Kostant-Kumar description}\label{more general}
The manifolds in section \ref{flag GKM} are also describable in terms of compact groups. Namely, if we let $G_0$ be the compact
form of $G$ and $K$ the maximal compact subgroup of $P$, then $M=G/P=G_0/K$. Moreover,
$W=W_{G_0}$ and $W(\Sigma)=W_K$, $W_{G_0}$ and $W_K$ being the Weyl groups of $G_0$ and $K$, so $M^T=W_{G_0}/W_K$.
A fundamental theorem in equivariant $K$-theory is the Kostant-Kumar theorem, which asserts that $K_T(M)$ is isomorphic
to the tensor product 
\begin{equation}\label{KK}
R^{W_K}\otimes_{R^W}R\;,
\end{equation}
where $R$ is the character ring $R(T)$, and $R^{W_K}$ and $R^W$ are the subrings of $W_K$ and $W$-invariant
elements in $R$. This description of $K_T(M)$ generalizes to $K$-theory the well-known Borel description of the
equivariant cohomology ring $H_T(M)$ as the tensor product 
\begin{equation}\label{eKK}
\S(\gd)^{W_K}\otimes_{\S(\gd)^W} \S(\gd)
\end{equation}
and in \cite{GHZ} the authors showed how to reconcile this description with the GKM description of $H_T(M)$.
Mutatis mutandi, their arguments work as well in $K$-theory and we will give below a brief description of the
$K$-theoretic version of their theorem.

Let $\Gamma$ be the GKM graph of $M$. As we pointed out above, $M^T=W/W_K$, so the vertices of
$\Gamma$ are the elements of $W/W_K$.
Now let $f\otimes g$ be a decomposable element of the tensor product \eqref{KK}.
Then one gets an $R$-valued function, $k(f\otimes g)$, on $W/W_K$ by setting
\begin{equation}\label{map k}
k(f\otimes g)(wW_K)=wfg\;.
\end{equation}
One can show that this defines a ring morphism, $k$, of the ring \eqref{KK} into the ring $\maps(M^T,R)$,
and in fact that this ring morphism is a bijection of the ring \eqref{KK} onto $K_{\alpha}(\Gamma)$.
(For the proof of the analogous assertions in cohomology see section 2.4 of \cite{GHZ}.)
Moreover the action of $W$ on $K_T(M)$ becomes, under this isomorphism, the action
\begin{equation}\label{map w}
w(f_1\otimes f_2)=f_1\otimes wf_2
\end{equation}
of $W$ on the ring \eqref{KK}, so the ring of $W$-invariant elements in $K_T(M)$ gets identified with the tensor product
$
R^{W_K}\otimes_{R^W}R^W
$,
which is just the ring $R^{W_K}$ itself.
Finally we note that if $M$ is the generalized flag variety, $G/B=G_0/T$, \eqref{KK} becomes the tensor product
\begin{equation}\label{RR}
R\otimes_{R^W}R\;
\end{equation}
and the ring of $W$-invariant elements in $K_T(M)$ becomes $R$.
Moreover, if $\pi$ is the fibration $G_0/T\to G_0/K$, the fiber $F$ over the identity coset of $G_0/K$
is $K/T$; so $K_T(F)$ is the tensor product $R\otimes_{R^{W_K}}R$, and the subring of $W_K$-invariant elements in
$K_T(F)$ is $R$ which, as we saw above, is also the ring of $W_G$-invariant elements in $K_T(G_0/T)$ which is also
(see section \ref{inv classes}) the ring of invariant elements associated with the fibration $G_0/T\to G_0/K$.
Thus most of the features of our GKM description of the fibration $G_0/T\to G_0/K$ have simple interpretations
in terms of this Kostant-Kumar model.

\end{document}